\tikzset{
    %Define standard arrow tip
    >=stealth',
    %Define style for boxes
    punkt/.style={
           rectangle,
           rounded corners,
           draw=black, very thick,
           text width=6.5em,
           minimum height=2em,
           text centered},
    % Define arrow style
    pil/.style={
           ->,
           thick,
           shorten <=2pt,
           shorten >=2pt,}
}
\newtheorem{theorem}{Theorem}[section]
\DeclareMathOperator{\lcm}{lcm}
\begin{document}
\title{Asymptotic behavior of a series of  Euler's totient  function $\varphi(k)$
times the index of $1/k$ in a  Farey sequence}

\author{R.~Tom\'as
\thanks{rogelio.tomas@cern.ch}\\
{\centering \it CERN, CH 1211 Geneva 23, Switzerland}}

%\classification{11Axx}
\maketitle
\begin{abstract}
Motivated by studies in accelerator physics this paper
computes the asymptotic behavior of the series $\displaystyle \sum_{k=1}^N \varphi(k) I_N\left(\frac{1}{k}\right)$,
where $\varphi(k)$ is Euler's totient function and $\displaystyle I_N\left(\frac{1}{k}\right)$ is the position
that $1/k$ occupies in the Farey sequence of order $N$.
To this end an exact formula for $\displaystyle I_N\left(\frac{1}{k}\right)$ is derived when 
all integers in $\displaystyle \left[2,\left\lceil \frac{N}{k} \right\rceil\right]$ are divisors of $N$.
%with $\frac{N}{j+1}<k<\frac{N}{j}$, are divisors of $N$.
\end{abstract}

\section{Results}

Let $\displaystyle I_N\left(\frac{1}{k}\right)$, with $k \in \mathbb{N}$
and $k\leq N$, be the position
that $1/k$ occupies in the Farey sequence of order $N$. Some useful facts
follow:
\begin{eqnarray}
I_N\left(\frac{0}{1}\right) &=& 1\ , \label{p1}\\
%I_N\left(\frac{1}{N}\right) &=& 2\ , \label{p2}\\
%I_N\left(\frac{1}{2}\right) &=& \frac{|F_N|+1}{2}\ ,\ \ \  \ \ with\ \ N>1\ ,\label{p3}\\
I_N\left(\frac{1}{1}\right) &=& |F_N|\ , \label{p2}\\
I_N\left(\frac{1}{k}\right) &=& 1+\sum_{j=k}^{N} \phi\left(j;\left[1, \left\lfloor\frac{j}{k}\right\rfloor \right]\right) \ , \label{p3}\\
I_N\left(\frac{1}{k}\right)&=&1+|F_N|-I_N\left(\frac{k-1}{k}\right)\  ,
\end{eqnarray}
where $|F_N|$ stands for the cardinality of 
the Farey sequence of order $N$ and Eq.~(\ref{p3}) is found at~\cite{Andrey04}~Remark~7.10 (there fractions are indexed starting with 0); $\phi(n;[\cdot,\cdot])$ is defined as the number of elements from an interval of integers that are relatively prime to $n$. Consequently $\phi(n;[1,n])\equiv\varphi(n)$ and $|F_N|=1+\sum_{j=1}^{N}\phi(j;[1,j])$.
Efficient algorithms for the computation of $I_N(x)$ were recently developed in~\cite{Jakub}.

\begin{theorem}
Given $k \in \mathbb{N}$  such that $k \leq N$, then
\begin{equation}
I_N\left(\frac{1}{k}\right)   \leq \frac{N^2 + N}{2k}\ .
\label{approx}
\end{equation}
\end{theorem}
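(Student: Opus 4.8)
The plan is to substitute the exact identity~(\ref{p3}) into the crudest available estimate for its summand and see how far that carries. Since $1$ is coprime to every integer, $\phi(n;[1,m])\le m$, so that $\phi\!\left(j;\left[1,\left\lfloor j/k\right\rfloor\right]\right)\le\left\lfloor j/k\right\rfloor\le j/k$ for each $j$ in the range $k\le j\le N$. Inserting this into~(\ref{p3}) and summing the arithmetic progression gives
\begin{equation*}
I_N\!\left(\frac1k\right)\;\le\;1+\frac1k\sum_{j=k}^{N}j\;=\;1+\frac{N(N+1)-k(k-1)}{2k}\;=\;\frac{N^2+N}{2k}+\frac{3-k}{2}\,.
\end{equation*}
As $(3-k)/2\le0$ for every $k\ge3$, this already proves~(\ref{approx}) when $k\ge3$.

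The cases $k=1$ and $k=2$ are tighter: the leftover $+1$ from~(\ref{p3}) is no longer absorbed, so I would replace the inequality $\phi\le\lfloor j/k\rfloor$ by information from the reflection $a/b\mapsto(b-a)/b$ of $F_N$, which gives $I_N(1/k)=1+|F_N|-I_N((k-1)/k)$. For $k=1$ this reduces to~(\ref{p2}), $I_N(1/1)=|F_N|=1+\sum_{j=1}^{N}\varphi(j)$, and the elementary bounds $\varphi(1)=1$ and $\varphi(j)\le j-1$ for $j\ge2$ yield $|F_N|\le 2+\binom{N}{2}=\tfrac12\!\left(N^2-N+4\right)\le\tfrac12\!\left(N^2+N\right)$ once $N\ge2$. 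For $k=2$ the point $1/2$ is its own reflection, so the identity becomes $I_N(1/2)=1+|F_N|-I_N(1/2)$, i.e.\ $I_N(1/2)=\tfrac12\!\left(|F_N|+1\right)\le\tfrac14\!\left(N^2-N+6\right)\le\tfrac14\!\left(N^2+N\right)$ once $N\ge3$. The finitely many remaining pairs with $N$ small are checked directly against $F_N$.

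I expect the only real obstacle to be this low-$k$ corner. The summand bound $\phi\le\lfloor j/k\rfloor$ wastes about $(k-1)/2$ over each run of $k$ consecutive indices, which comfortably beats the stray $+1$ once $k\ge3$ but essentially vanishes at $k=1$ (there $\lfloor j\rfloor=j$, no waste at all) and is too small at $k=2$; that is precisely why one must switch there to the \emph{exact} value $I_N(1/2)=\tfrac12(|F_N|+1)$ supplied by the Farey symmetry, combined with the classical $\varphi(j)\le j-1$. Everything else — the arithmetic-progression sum and the elementary totient estimate — is routine.
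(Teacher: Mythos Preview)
Your core argument --- bound each summand of~(\ref{p3}) by $j/k$ and sum the arithmetic progression --- is exactly what the paper does. The paper is in fact less careful than you: it records only the bound $\sum_{j=k}^{N} j/k=(N^2-k^2+N+k)/(2k)\le(N^2+N)/(2k)$ on the sum and silently drops the outer ``$+1$'' in~(\ref{p3}), so your splitting-off of the cases $k=1,2$ via the reflection identity and the estimate $\varphi(j)\le j-1$ is genuine added rigor rather than a different route.

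One caveat on your last sentence: carrying out the promised ``direct check'' for the residual small-$N$ pairs would actually uncover counterexamples. With the indexing of~(\ref{p1})--(\ref{p2}) one has $I_1(1/1)=|F_1|=2>1=(1^2+1)/2$ and $I_2(1/2)=2>3/2=(2^2+2)/4$, so the inequality as stated fails precisely at $(N,k)=(1,1)$ and $(2,2)$. This is a defect of the theorem statement (and of the paper's own proof, which conceals it by omitting the $+1$), not of your method; everything you wrote for $k\ge3$, for $k=1$ with $N\ge2$, and for $k=2$ with $N\ge3$ is correct.
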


\begin{proof}
In Eq.~(\ref{p3}) the set $\left[1, \left\lfloor\frac{j}{k}\right\rfloor\right]$
has a maximum of $j/k$ elements with $j$ running between $k$ and $N$.
Therefore $\phi$ tests a maximum of
\begin{eqnarray}
\sum_{j=k}^{N} \frac{j}{k} &=& \frac{(N-k+1)(N+k)}{2k} =\frac{N^2-k^2+N+k}{2k}
\end{eqnarray}
elements.
\end{proof}

Let the subsequence of $F_N$, $F_{N}^{1/a,\, 1/b}$ be  defined
as all the fractions of $F_N$ in $[1/a,\ 1/b]$ with $1\leq b \leq a \leq N$. 
\begin{theorem}\label{maptheo}
If $N$ is a multiple of $i$ and $i+1$ there is a bijective and order-preserving
map between $F_i$ and $F_{N}^{1/q,\, 1/(q-1)}$, with $q$ being an integer 
fulfilling  $ N/(i+1) < q \leq N/i$, given by
\begin{equation}
F_i \rightarrow F_{N}^{1/q,\, 1/(q-1)}\ , \ \ \ \ \ \ \frac{h}{k} \mapsto \frac{k}{kq-h }\label{map}\ .
\end{equation}
\end{theorem}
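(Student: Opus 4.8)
The plan is to put the map into a coordinate-free form. Writing $x=h/k\in[0,1]$, the image $k/(kq-h)$ has reciprocal $(kq-h)/k=q-x$, so the map of Eq.~(\ref{map}) is nothing but $x\mapsto 1/(q-x)$, a strictly increasing bijection of $[0,1]$ onto $[1/q,\,1/(q-1)]$. Note $q\ge 2$: since consecutive integers are coprime, $N$ is a multiple of $\lcm(i,i+1)=i(i+1)$, hence $q>N/(i+1)\ge i\ge 1$; this also makes the window $[1/q,\,1/(q-1)]\subseteq[0,1]$ non-degenerate. Strict monotonicity then yields injectivity and order-preservation for free, so the real work is to show the map lands inside $F_{N}^{1/q,\,1/(q-1)}$ and is onto it.

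For well-definedness I would start from $h/k\in F_i$ in lowest terms with $1\le k\le i$ and check three things: $\gcd(k,kq-h)=\gcd(k,h)=1$, so $k/(kq-h)$ is already reduced; the denominator obeys $kq-h\le kq\le iq\le N$ using the stated bound $q\le N/i$; and the value lies in $[1/q,\,1/(q-1)]$ because $x=h/k\in[0,1]$. Hence the image is a genuine element of $F_{N}^{1/q,\,1/(q-1)}$.

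For surjectivity, given $a/b\in F_{N}^{1/q,\,1/(q-1)}$ the forced preimage is $x=q-b/a$, i.e. $k=a$ and $h=qa-b$. I would then verify $\gcd(h,k)=\gcd(qa-b,a)=\gcd(b,a)=1$ (so this is already the reduced form), observe that $1/q\le a/b\le 1/(q-1)$ is exactly equivalent to $0\le h\le k$, and note that $h/k$ indeed maps back to $a/b$.

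The hard part will be the one remaining inequality $k=a\le i$ — that a Farey fraction falling in this narrow window cannot have numerator exceeding $i$. Here I would exploit the second divisibility hypothesis: since $(i+1)\mid N$, the number $N/(i+1)$ is an integer, so the strict inequality $q>N/(i+1)$ upgrades to $q-1\ge N/(i+1)$; combined with $a(q-1)\le b\le N$ this gives $a\le N/(q-1)\le i+1$. The borderline case $a=i+1$ must then be excluded: it would force $q-1=N/(i+1)$, hence $b\ge a(q-1)=N$, so $b=N$ and $a/b=(i+1)/N$, which is impossible in lowest terms because $(i+1)\mid N$ with $i+1\ge 2$. Therefore $a\le i$, so $h/k\in F_i$, and the explicit map is the claimed order-preserving bijection.
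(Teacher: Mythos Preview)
Your proof is correct and, if anything, cleaner than the paper's. The paper proceeds in two stages: first it invokes the mediant (Stern--Brocot) construction, arguing pictorially that repeatedly taking mediants starting from $1/q$ and $1/(q-1)$ versus from $0/1$ and $1/1$ produces exactly the correspondence $h/k\leftrightarrow k/(kq-h)$; this is how it establishes coprimality and order preservation. You bypass that entirely by recognising the map as the M\"obius transformation $x\mapsto 1/(q-x)$ and checking $\gcd(k,kq-h)=\gcd(k,h)$ directly, which is more self-contained. For the reverse inclusion the paper also uses the integrality of $N/(i+1)$ to get $q\ge N/(i+1)+1$, but then splits off the endpoint $h=k$ (i.e.\ $1/1$) and uses the strict inequality $k>h$ to force $kN/(i+1)<N$; your variant---bounding $a\le i+1$ and then killing $a=i+1$ because $(i+1)/N$ cannot be in lowest terms---is a tidy alternative that makes the role of the divisibility hypothesis on $i+1$ especially visible. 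The paper's mediant picture gives a structural explanation of \emph{why} such a correspondence exists, while your approach gives a shorter algebraic verification with an explicit inverse.
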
\noindent
%Note that a similar lemma  is presented in~\cite{Matveev} limited only
%to the case $q=2$.

\begin{proof}
The demonstration  is given in two steps. The first step is shown
in Fig.~\ref{appl}, where by construction it is clear that all Farey fractions
in [$1/q,\, 1/(q-1)$] at any order are connected to the Farey fractions in $[0/1,\, 1/1]$ by the application in Eq.~(\ref{map}).
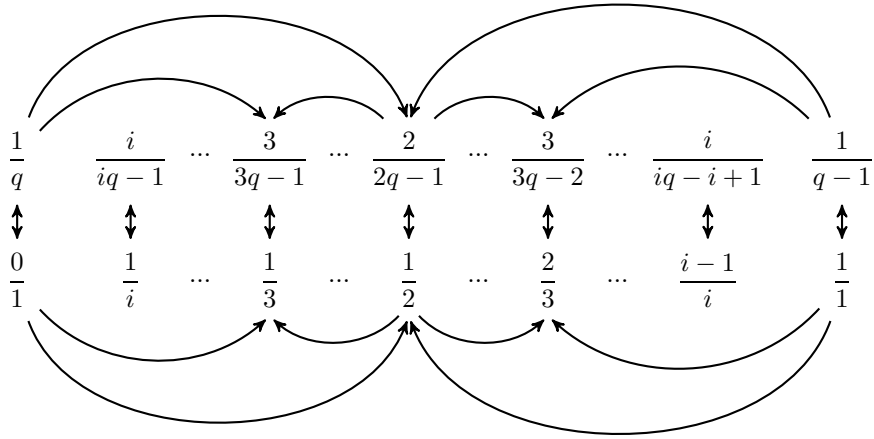
\begin{figure}
\centering
\begin{tikzpicture}[node distance=1cm, auto,]
 %nodes
 \node[] (1/2) {$\displaystyle \frac{1}{2}$};

\node[left=0.4cm of 1/2] (l1/2) {$...$};

\node[left=0.4cm of l1/2] (1/3) {$\displaystyle \frac{1}{3}$};

\node[right=0.4cm of l1/2] (d1/2) {$\,\,\,\,\,$};
\node[below=0.05cm of d1/2] (dd1/2) {$\,\,\,\,$}
  edge[pil,bend left=45] (1/3.south);

\node[right=0.4cm of 1/2] (r1/2) {$...$};
\node[right=0.4cm of r1/2] (2/3) {$\displaystyle \frac{2}{3}$};
 \node[left=0.4cm of 1/3] (l1/3) {$...$};
\node[left=0.4cm of l1/3] (1/i) {$\displaystyle \frac{1}{i}$};

\node[left=of 1/i] (0/1) {$\displaystyle \frac{0}{1}$}
  edge[pil,bend right=70] (1/2.south)
  edge[pil,bend right=45] (1/3.south);

\node[right=0.4cm of 2/3] (r2/3) {$...$};
\node[right=0.4cm of r2/3] (i1/i) {$\displaystyle \frac{i-1}{i}$};

\node[right=of i1/i] (1/1) {$\displaystyle \frac{1}{1}$}
   edge[pil,bend left=70] (1/2.south)
   edge[pil,bend left=45] (2/3.south);
\node[left=0.4cm of r1/2] (ddd1/2) {$\,\,\,\,\,$};
\node[below=0.05cm of ddd1/2] (dddd1/2) {$\,\,\,\,$}
  edge[pil,bend right=45] (2/3.south);
\node[above=0.6cm of 1/3] (b) {$\displaystyle \frac{3}{3q-1}$}
  edge[pil,<->, right=90] (1/3.north);
\node[above=0.6cm of 2/3] (d) {$\displaystyle \frac{3}{3q-2}$}
  edge[pil,<->, right=90] (2/3.north);
\node[above=0.6cm of 1/2] (c) {$\displaystyle \frac{2}{2q-1}$}
 edge[pil,bend right=45] (b.north)
 edge[pil,bend left=45] (d.north)
  edge[pil,<->, right=90] (1/2.north);
\node[above=0.6cm of 0/1] (a) {$\displaystyle \frac{1}{q}$}
 edge[pil,bend left=45] (b.north)
 edge[pil,bend left=70] (c.north)
  edge[pil,<->, right=90] (0/1.north);
\node[above=0.6cm of 1/1] (e) {$\displaystyle \frac{1}{q-1}$}
 edge[pil,bend right=45] (d.north)
 edge[pil,bend right=70] (c.north)
  edge[pil,<->, right=90] (1/1.north);
\node[above=0.6cm of i1/i] (f) {$\displaystyle \frac{i}{iq-i+1}$}
  edge[pil,<->, right=90] (i1/i.north);
\node[above=0.6cm of 1/i] (g) {$\displaystyle \frac{i}{iq-1}$}
  edge[pil,<->, right=90] (1/i.north);
\node[above=1.39cm of l1/2] (h) {$...$};
\node[above=1.39cm of l1/3] (i) {$...$};
\node[above=1.39cm of r1/2] (j) {$...$};
\node[above=1.39cm of r2/3] (k) {$...$};
\end{tikzpicture}
\caption{Application between the Farey fractions in $[1/q, 1/(q-1)]$ and $[0,1]$
demonstrated by using that the next Farey fraction appearing between $h/k$ and $h'/k'$
is $(h+h')/(k+k')$. By applying this rule independently to $1/q$ and $1/(q-1)$ (top)
and $0/1$ and $1/1$ (bottom) the map of Eq.~(\ref{map}) is apparent.
The second and the last to the last terms of $F_i$ (bottom) and $F_N^{1/q,\, 1/(q-1)}$ (top) are also shown. They can be computed
using Corollary~3.2 of~\cite{Matveev}.} \label{appl}
\end{figure}

A second step is needed to show that the application is bijective between
the sets $F_i$ and $F_{N}^{1/q,\, 1/(q-1)}$. The fraction $h/k$ belongs to $F_i$ if $k\leq i$, and similarly $k/(kq-h)$ belongs to $F_{N}^{1/q,\, 1/(q-1)}$ if $(kq-h) \leq N$. 
Since $N$ is a multiple of $i$ and $i+1$ and 
that $ N/(i+1) < q \leq N/i$, the largest value $(kq-h)$  takes is $\left(k\frac{N}{i}-h\right)$.
 Therefore $k/(kq-h)$ belongs to $F_{N}^{1/q,\, 1/(q-1)}$ if $k\leq i$, i.e., if  $h/k$ belongs to $F_i$. 

In the opposite direction, if $(kq-h)\leq N$ the largest possible $k$ is obtained by
inserting the smallest $q$, which is $ N/(i+1)+1$, yielding
\begin{equation}
k\left(\frac{N}{i+1}+1 \right)-h \leq N  \label{ineq}\ .
\end{equation}
If $k=h$  this corresponds to the trivial case $h/k=1/1$, which clearly
satisfies~Eq.~(\ref{appl}).
Else, $k>h$ and Eq.~(\ref{ineq}) becomes
\begin{eqnarray}
k\frac{N}{i+1}  &<& N  \label{ineq2}\ , \\
k &\leq& i  \label{ineq3}\ ,
\end{eqnarray}
concluding that if $k/(kq-h)$ belongs to $F_{N}^{1/q,\, 1/(q-1)}$, $h/k$ belongs to $F_i$.

\end{proof}

%The bijective map in Eq.~(\ref{map}) allows to assert the equality between the cardinalities of the two sets,  $\displaystyle\left|F_{N}^{1/q,\, 1/(q-1)}\right|=|F_i|$. This is used to
%determine the location of $1/k$ in $F_N$, i.e.,  $I_N(1/k)$, when the entire subsequence $F_N^{1/k}$ can be mapped into Farey sequences of lower order.

Note that in the case $q = 2$ the map in Eq.~(\ref{map}) can be viewed as a map 
from~\cite{Matveev}~Lemma 1.1 that reflects a Farey sequence to the right half sequence of a
Farey subsequence arising in the combinatorics of finite sets.

\begin{theorem}
Let $N/(i+1) \leq k \leq N/i$ and $N$ be a multiple of all integers in $[2,i]$,
then
\begin{eqnarray}
I_N\left(\frac{1}{k}\right) &=& 2+N\sum_{j=1}^{i}\frac{\varphi(j)}{j}-k\Phi(i)\ ,
\label{Itheo}
\end{eqnarray}
where $\Phi(i)$ is  the totient summatory function, 
$
\Phi(i) \equiv \sum_{j=1}^{i}\varphi(j) \equiv \left|F_i\right|-1\ 
$.
\end{theorem}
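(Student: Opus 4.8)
The plan is to compute $I_N(1/k)$ by counting Farey fractions of $F_N$ that lie below $1/k$, organizing the count around the interval structure $[1/q,1/(q-1)]$ exploited in Theorem~\ref{maptheo}. Since $N/(i+1)\le k\le N/i$, the fraction $1/k$ sits inside the block $[1/q,1/(q-1)]$ with $q=\lfloor N/k\rfloor$; because $N$ is a multiple of every integer in $[2,i]$, one can always choose $q$ in the admissible range so that Theorem~\ref{maptheo} applies to each block $[1/m,1/(m-1)]$ for $m=2,\dots,i$ (here $N$ being divisible by $m$ and $m-1$ for all such $m$ is exactly what the hypothesis guarantees, once one also treats the endpoint cases). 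So I would write
\begin{equation}
I_N\!\left(\frac1k\right)=I_N\!\left(\frac1q\right)+\#\Big\{x\in F_N:\tfrac1q\le x\le\tfrac1k\Big\}-1,
\end{equation}
and reduce the second term, via the order-preserving bijection of Eq.~(\ref{map}) between $F_i$ and $F_N^{1/q,1/(q-1)}$, to counting elements of $F_i$ in $[h/k_0,\,\dots]$ — concretely, $1/k$ is the image of some explicit fraction of $F_i$, and the count of $F_N$-fractions in $[1/q,1/k]$ equals the index of that preimage inside $F_i$.

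Next I would evaluate $I_N(1/q)$ by telescoping over the blocks. Writing $I_N(1/q)-1$ as the number of $F_N$-fractions in $[0,1/q]$, and noting that $[0,1/i]$ decomposes (up to $N$ being divisible by the relevant integers) as the disjoint union of the blocks $[1/(m+1),1/m]$ for $m\ge i$ together with the single point $0/1$, the bijections of Theorem~\ref{maptheo} turn the count in each block $[1/(m+1),1/m]$ into $|F_m|-1=\Phi(m)$ minus an overlap correction at the shared endpoint. Rather than summing infinitely many blocks, the cleaner route is: the number of $F_N$-fractions in $[1/q,1]$ is $\sum_{m=2}^{i}(|F_m|-1)$ minus $(i-1)$ shared endpoints, i.e. $\Phi(i)+\sum_{m=2}^{i-1}\Phi(m)-(i-1)$ — wait, more carefully, adjacent blocks share exactly one fraction ($1/m$), so the union over $m=2,\dots,i$ of $F_N^{1/(m),1/(m-1)}$ has cardinality $\sum_{m=2}^{i}|F_m|-(i-1)$. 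Then $I_N(1/k)=|F_N|+1-\big(\text{that cardinality}\big)+\big(\text{partial count inside the block of }1/k\big)$, and the partial count inside the $q$-block is, by the bijection, the index in $F_i$ of the preimage of $1/k$, which one computes from $k=k_0 q-h_0$ to be a linear function $a-k\cdot b$ of $k$.

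The final step is bookkeeping: collect the closed forms. Using $|F_m|=1+\Phi(m)$ and the classical identity $\Phi(m)=\sum_{j=1}^m\varphi(j)$, the telescoped block sum contributes a term proportional to $\sum_{j=1}^{i}\Phi(j)$; I would then apply summation by parts (Abel summation) to rewrite $\sum_{m=2}^{i}\Phi(m)$ in terms of $\sum_{j=1}^{i}\varphi(j)(i+1-j)$, and the $N$-dependence enters because the number of full blocks from $1/q$ down to $0$ is controlled by $N/m$ for each $m$ — this is where the factor $N\sum_{j=1}^i \varphi(j)/j$ materializes, since the block $[1/(m+1),1/m]$ at order $N$ contains about $N/m\cdot$(something) extra fractions beyond what $F_i$-type blocks carry, and divisibility makes "$\approx$" an equality. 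Matching the constant, the linear-in-$k$ term $-k\Phi(i)$, and the additive $2$ then finishes the proof.

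The main obstacle I expect is the endpoint/divisibility bookkeeping: Theorem~\ref{maptheo} requires $N$ divisible by both $i$ and $i+1$, but here we only assume $N$ divisible by $2,\dots,i$, so applying the bijection block-by-block for $m=2,\dots,i$ needs $N$ divisible by $m$ and $m-1$ (which holds) while the top block needs care at $m=i$ where $i+1$ may not divide $N$ — one must check that the relevant $q$ still exists in the half-open range $(N/(i+1),N/i]$ and that no fractions are miscounted at the shared boundaries $1/m$. Getting every shared endpoint counted exactly once, and correctly handling the block actually containing $1/k$ (a partial block), is the delicate part; the rest is Abel summation and algebra.
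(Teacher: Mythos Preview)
Your overall plan---cut $[0,1/k]$ into consecutive blocks $F_N^{1/q,\,1/(q-1)}$ and read off each block's size via Theorem~\ref{maptheo}---is exactly the paper's strategy, but the execution conflates the two indices that Theorem~\ref{maptheo} keeps separate. You set $q=\lfloor N/k\rfloor$, which for $N/(i+1)\le k\le N/i$ gives $q=i$, and then assert $1/k\in[1/q,1/(q-1)]=[1/i,1/(i-1)]$; but $1/k\le i/N$ is far below $1/i$. In the block decomposition $q$ is the denominator of the block's left endpoint, so the block containing $1/k$ has $q=k$ (large, of order $N/i$); the \emph{small} integer $i$ is not a block index but the order of the Farey sequence $F_i$ that Theorem~\ref{maptheo} puts in bijection with that block. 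The same swap recurs when you claim the union of $F_N^{1/m,\,1/(m-1)}$ over $2\le m\le i$ has $\sum_{m=2}^{i}|F_m|-(i-1)$ elements: Theorem~\ref{maptheo} gives $|F_N^{1/m,\,1/(m-1)}|=|F_{j}|$ with $j=\lfloor N/m\rfloor$, not $j=m$, and for such small $m$ the divisibility hypotheses of Theorem~\ref{maptheo} are not even available.

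With the roles of $q$ and $j$ untangled the argument is short and needs neither the complement $|F_N|-(\cdots)$ nor Abel summation. One sums over blocks with $q$ running from $N/i+1$ up to $N$; grouping by the Theorem~\ref{maptheo} index $j\in\{1,\dots,i-1\}$ there are exactly $N/j-N/(j+1)=N/\bigl(j(j+1)\bigr)$ values of $q$ in each group (this is where the divisibility hypothesis enters, and only for $j+1\le i$), each block contributing $|F_j|-1=\Phi(j)$ new fractions, whence $I_N\bigl(\tfrac{1}{N/i}\bigr)=2+N\sum_{j=1}^{i-1}\Phi(j)/\bigl(j(j+1)\bigr)$. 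The remaining $N/i-k$ blocks between $1/(N/i)$ and $1/k$ all have index $j=i$ and contribute $\Phi(i)$ apiece. A partial-fractions telescope then converts $\Phi(i)/i+\sum_{j=1}^{i-1}\Phi(j)/\bigl(j(j+1)\bigr)$ into $\sum_{j=1}^{i}\varphi(j)/j$, and Eq.~(\ref{Itheo}) follows. Your worry about $i+1\nmid N$ is harmless here: the grouped sum uses only $j\le i-1$, and for the $j=i$ blocks one has $q\ge k+1>N/(i+1)$, which is all the bijection actually requires for those $q$.
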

\begin{proof}
Express $I_N(1/(N/i))$  as the sum of the cardinalities of all subsequences
of the form $F_N^{1/q, 1/(q-1)}$ such that $q > N/i$,
\begin{eqnarray}
I_N\left(\frac{1}{N/i}\right)&=& 2 + \sum_{q=N/i+1}^{N} \left(\left|F_N^{1/q, 1/(q-1)}\right|-1\right)\  . \label{summing}
\end{eqnarray}
By virtue of Theorem~\ref{maptheo}, $\left|F_N^{1/q, 1/(q-1)}\right|=|F_i|$ when
$ N/(i+1) < q \leq N/i$ and Eq.~(\ref{summing}) is directly re-written 
as 
\begin{eqnarray}
%I_N\left(\frac{1}{k}\right)&=&\left(\left|F_i\right|-1\right)\left(\frac{N}{i}-k\right) + I_N\left(\frac{1}{N/i}\right)\ , \label{Itheo1} \\
I_N\left(\frac{1}{N/i}\right)&=& 2 + N\sum_{j=1}^{i-1}\frac{\Phi(j)}{j(j+1)}\ . \label{Itheo2} 
\end{eqnarray}
$I_N(1/k)$ is computed by adding to the expression above the cardinality
of the remaining subsequences between $N/i$ and $k$, yielding
\begin{eqnarray}
I_N\left(\frac{1}{k}\right)&=&\Phi(i)\left(\frac{N}{i}-k\right) + I_N\left(\frac{1}{N/i}\right)\ . \label{Itheo1}
%I_N\left(\frac{1}{N/i}\right)&=& N\sum_{j=1}^{i-1}\frac{\left(\left|F_j\right|-1\right)}{j(j+1)}+2\ . \label{Itheo2} 
\end{eqnarray}
After some algebra Eq.~(\ref{Itheo}) is obtained. 
\end{proof}

\begin{theorem}
\begin{eqnarray}
\sum_{k=1}^{N} \varphi(k)I_N(1/k)=\frac{N^3}{6\zeta(3)} + O\left(\frac{N^3}{\log N}\right) \label{maineq}
\end{eqnarray}
\end{theorem}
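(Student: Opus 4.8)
The plan is to fold the exact formula~(\ref{p3}) into the sum, interchange the order of summation, and reduce everything to a classical divisor‑sum estimate. First, rewrite (\ref{p3}) as $I_N(1/k)=1+\sum_{j=1}^{N}\phi\!\left(j;\left[1,\left\lfloor j/k\right\rfloor\right]\right)$ (the terms with $j<k$ vanish) and observe that $h\le\left\lfloor j/k\right\rfloor$ is equivalent to $hk\le j$, so that $\phi(j;[1,\lfloor j/k\rfloor])=\#\{h\ge1:\ hk\le j,\ \gcd(h,j)=1\}$. Multiplying by $\varphi(k)$, summing over $k\le N$, and interchanging the summations so that $k$ is innermost --- using $\sum_{k:\,hk\le j}\varphi(k)=\Phi(\lfloor j/h\rfloor)$ --- yields the exact identity
\[
\sum_{k=1}^{N}\varphi(k)\,I_N\!\left(\frac1k\right)=\Phi(N)+\sum_{j=1}^{N}\ \sum_{\substack{1\le h\le j\\ \gcd(h,j)=1}}\Phi\!\left(\left\lfloor\frac{j}{h}\right\rfloor\right).
\]
This is precisely the telescoping behind (\ref{Itheo}), now performed for all $k$ at once.

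Next I would insert the classical estimate $\Phi(n)=\dfrac{n^{2}}{2\zeta(2)}+O(n\log n)$. Replacing $\Phi(\lfloor j/h\rfloor)$ by $\dfrac{j^{2}}{2\zeta(2)h^{2}}$ costs $O\!\left(\dfrac{j}{h}\log\dfrac{2j}{h}\right)$ uniformly for $1\le h\le j$ (the floor contributing only a further $O(j/h)$), and summing this over $h\le j\le N$ gives $O(N^{2}\log^{2}N)$; completing the inner coprimality sum, $\sum_{h\le j,\ \gcd(h,j)=1}h^{-2}=\zeta(2)\prod_{p\mid j}(1-p^{-2})+O(1/j)$, costs a further $O(N^{2})$. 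One is then left with $\frac12\sum_{j\le N}f(j)+O(N^{2}\log^{2}N)$, where $f(j):=j^{2}\prod_{p\mid j}(1-p^{-2})$ is multiplicative with $f(p^{a})=p^{2a}-p^{2a-2}$, hence
\[
\sum_{j\ge1}\frac{f(j)}{j^{s}}=\prod_{p}\frac{1-p^{-s}}{1-p^{2-s}}=\frac{\zeta(s-2)}{\zeta(s)},\qquad\text{equivalently}\qquad f(j)=\sum_{d\mid j}\mu(d)\Bigl(\frac{j}{d}\Bigr)^{2},
\]
$\mu$ being the M\"obius function. Therefore $\sum_{j\le N}f(j)=\sum_{d\le N}\mu(d)\sum_{m\le N/d}m^{2}=\dfrac{N^{3}}{3}\sum_{d\le N}\dfrac{\mu(d)}{d^{3}}+O(N^{2})=\dfrac{N^{3}}{3\zeta(3)}+O(N^{2})$, using $\sum_{d>N}\mu(d)d^{-3}=O(N^{-2})$. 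Substituting gives $\sum_{k=1}^{N}\varphi(k)I_N(1/k)=\dfrac{N^{3}}{6\zeta(3)}+O(N^{2}\log^{2}N)$, which is (\ref{maineq}) --- in fact with a sharper error term.

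The one delicate point is the uniformity in the second step: the error in $\Phi(\lfloor j/h\rfloor)$ must be controlled all the way down to $h$ comparable with $j$, where $j/h$ is bounded and the asymptotic for $\Phi$ offers no saving, and the accumulated double sum of these errors must stay below the target (it does, comfortably). It is worth emphasising why the exact formula (\ref{Itheo}) is needed rather than the crude bound (\ref{approx}) or even the correct first-order estimate $I_N(1/k)\sim 3N^{2}/(\pi^{2}k)$: the latter would give $\sum_{k\le N}\varphi(k)I_N(1/k)\sim\frac{3N^{2}}{\pi^{2}}\sum_{k\le N}\frac{\varphi(k)}{k}\sim\frac{18}{\pi^{4}}N^{3}$, and $\frac{18}{\pi^{4}}\ne\frac{1}{6\zeta(3)}$. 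After weighting by $\varphi(k)$, the correction term $-k\Phi(i)$ in (\ref{Itheo}) does \emph{not} average out; it conspires with the leading term to turn $\zeta(2)^{-2}$ into $\zeta(3)^{-1}$, which is exactly the passage from $\zeta(s-1)/\zeta(s)$ (the Dirichlet series of $\varphi$) to $\zeta(s-2)/\zeta(s)$, with its simple pole at $s=3$.
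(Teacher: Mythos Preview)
Your argument is correct, and it is genuinely different from the paper's. The paper restricts to highly composite values $N=\lcm(2,\dots,i_{\max})$, splits the sum at $k=N/i_{\max}$, bounds the small-$k$ tail crudely by Theorem~1.1, and evaluates the large-$k$ bulk using the exact formula of Theorem~1.3 (which in turn rests on the Farey bijection of Theorem~1.2); the surviving error $O(N^{3}/\log N)$ comes entirely from that small-$k$ tail. You instead bypass Theorems~1.2--1.3 altogether: starting from~(\ref{p3}) and swapping the order of summation collapses the whole sum to $\Phi(N)+\sum_{j\le N}\sum_{h\le j,\ (h,j)=1}\Phi(\lfloor j/h\rfloor)$, which is valid for every $N$ and reduces the problem to the classical asymptotics of $\Phi$ and of Jordan's totient $J_{2}(j)=j^{2}\prod_{p\mid j}(1-p^{-2})$. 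This buys you both generality (no arithmetic hypothesis on $N$) and a much sharper remainder $O(N^{2}\log^{2}N)$. What the paper's route offers in exchange is structural insight: the bijection $F_{i}\to F_{N}^{1/q,\,1/(q-1)}$ explains \emph{why} $\zeta(3)$ appears, via the stratification of $[0,1/i_{\max}]^{c}$ into copies of lower-order Farey sequences.

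One small remark: your sentence ``This is precisely the telescoping behind~(\ref{Itheo}), now performed for all $k$ at once'' overstates the connection---you never actually use~(\ref{Itheo}), only~(\ref{p3}), and the identity you derive stands on its own.
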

\begin{proof}

Let $N=\lcm(2,3,4,...,i_{max})$ be the least common multiple of the first $i_{max}$ numbers. The summation in the left hand side of Eq.~(\ref{maineq}) is split into two parts, 
the first part being for $k\leq N/i_{max}$. Equation~(\ref{approx}) is used
to give an upper bound to the summation corresponding to this first part.
In the second part, $k > N/i_{max}$, Eq.~(\ref{Itheo}) is used. 
The two contributions are given by
\begin{eqnarray}
\sum_{k=1}^{N/i_{max}}\varphi(k)I_N(1/k)\!\!&\leq&  (N^2+N)\sum_{k=1}^{N/i_{max}}\frac{\varphi(k)}{2k}\ ,\\
\sum_{k>N/i_{max}}^{N}\varphi(k)I_N(1/k)\!\!&=&   \sum_{i=1}^{i_{max}-1}\!\!\sum_{k=\frac{N}{i+1}+1}^{N/i}\varphi(k)\left(2+N\sum_{j=1}^{i}\frac{\varphi(j)}{j}-k\Phi(i)   \right)   \nonumber
\end{eqnarray}
After some algebra using the following relations,
\begin{eqnarray}
\sum_{k=1}^{N}\varphi(k)&=&\frac{3}{\pi^2}N^2 + O(N\log N)\ , \label{asym}\\
 \sum_{k=1}^{N} \frac{\varphi(k)}{k} &=&\frac{6}{\pi^2}N + O((\log N)^{2/3}(\log\log N)^{4/3})\ ,   \\
\sum_{k=1}^{N}\varphi(k)k&=&\frac{2}{\pi^2}N^3+O(N^2)\ ,\\
\sum_{k=1}^{\infty} \frac{\varphi(k)}{k^3}&=& \frac{\zeta(2)}{\zeta(3)}=\frac{\pi^2}{6\zeta(3)}\ ,
\end{eqnarray}
one obtains
\begin{eqnarray}
\sum_{k=1}^{N/i_{max}}\varphi(k)I_N(1/k)\!\! &\leq& \frac{3}{\pi^2i_{max}} (N^3 + N^2) \ , \label{res1.1}\\
\sum_{k>N/i_{max}}^{N}\varphi(k)I_N(1/k)\!\!&=&  \frac{N^3}{\pi^2}\sum_{i=1}^{i_{max}} \frac{\varphi(i)}{i^3}+ O(N^2\log N)\ . \label{res2}
\end{eqnarray}

When taking the limit $i_{max}\rightarrow\infty$, $N$ tends to $e^{i_{max}}$
and, inversely, $i_{max}$ tends to $\ln(N)$. 
Equations~(\ref{res1.1}) and (\ref{res2}) become
\begin{eqnarray}
\sum_{k=1}^{N/i_{max}}\varphi(k)I_N(1/k)\!\! &\leq& \frac{3}{\pi^2} \frac{N^3+N^2}{\ln(N)} \ , \\
\sum_{k>N/i_{max}}^{N}\varphi(k)I_N(1/k) &=&  \frac{N^3}{6\zeta(3)} + O(N^2\log N)\ .
\end{eqnarray}
Combining these two last equations Eq.~(\ref{maineq}) is obtained.
\end{proof}

\section{Discussion}
Let $T_N^D$ be the number of  linear, integral and irreducible polynomials of dimension $D$ of the form
\begin{equation}
\sum_{i=1}^{D} a_ix_i - a_{D+1}\ ,
\end{equation}
such that $\sum_{i=1}^{D}|a_i|\leq N$ and 
having at least one root  in the unitary $D-$cube.
$D=0$ corresponds to the null polynomial $0$.
$D=1$ corresponds to Farey sequences. 
The result of the previous section serves to
compute $T_N^2$, see~\cite{tomas,brown}.
The asymptotic behavior in $N$ of $T_N^D$, for $D=0,1,2$ follow,
\begin{eqnarray}
%T_N^{0}&=&\pi(N) = \frac{N}{\log N} + O\left(\frac{N}{(\log N)^2}\right) ,\\
T_N^{0} &=& 1\ , \\
T_N^{1}&=&  \frac{3}{\pi^2}N^2 + O(N\log N)\ ,\\
T_N^{2}&=&\frac{2N^3}{3\zeta(3)} + O\left(\frac{N^3}{\log N}\right)\ .
\end{eqnarray}

\section{Acknowledgments}
Andrey Matveev is warmly thanked for making very useful suggestions on the
manuscript.

%##################################
\end{document}